\begin{document}
\title[Existence results on $b$-metric space]{Existence results for a generalized fractional boundary value problem in $b$-metric space}

\author[F. Haddouchi]{Faouzi Haddouchi}

\address{
Department of Physics, University of Sciences and Technology of
Oran-MB, Oran, Algeria
\newline
And
\newline
Laboratory of Fundamental and Applied Mathematics of Oran (LMFAO), University of Oran 1, Oran, Algeria}
\email{fhaddouchi@gmail.com}
\subjclass[2010]{34A08, 34A12, 47H10, 34B15}
\keywords{Generalized Riemann-Liouville fractional differential equation, $\alpha-\psi$-Geraphty contractive type mapping, generalized Banach contraction principle, $b$-metric space, fixed points, positive solution}

\begin{abstract}
This paper is concerned with a class of nonlinear boundary value problem involving fractional derivative in the $\varphi$-Riemann-Liouville sense. Some Properties of the Green's function for this problem are mentioned. By means of the Banach contraction principle in $b$-metric space and the technique of the $\gamma$–$\psi$ Geraghty contractive maps, existence and uniqueness results are obtained. Two examples are given to support the theoretical results.
\end{abstract}

\maketitle \numberwithin{equation}{section}
\newtheorem{theorem}{Theorem}[section]
\newtheorem{lemma}[theorem]{Lemma}
\newtheorem{definition}[theorem]{Definition}
\newtheorem{proposition}[theorem]{Proposition}
\newtheorem{corollary}[theorem]{Corollary}
\newtheorem{remark}[theorem]{Remark}
\newtheorem{exmp}{Example}[section]

\section{Introduction\label{sec:1}}

In recent years, the theory of fractional calculus has attracted considerable interest in mathematics and also in many applications, such as physics, mechanics, chemistry, engineering, etc. We refer to \cite{Kilbas,Debnath,Gambo,Jarad2,Mainardi,Mall,Qu,Rostami,Zhou,Podlubny} and references therein.

Many works have studied fractional differential equations using several definitions of a
fractional derivative, we cite here the works \cite{Seemab,Vivek,Had,Almeida1,Almeida2,Hilfer, Afshari2,Samko,
Jarad,Katugampola,Sousa1,Sousa2}.

In \cite{Karapinar}, the notion of generalized $\alpha-\psi$-Geraghty contractive
($\alpha-\psi$-GC) type mappings in complete $b$-metric spaces was introduced. Other recent works on this notion can be founded in \cite{Afshari10,Afshari3,Afshari4,Afshari5,Afshari1,Afshari11,Afshari7,Samet,Aydi}.\\

Afshari et.al., in \cite{Afshari8} studied the existence of positive solutions for
the generalized fractional BVPs of the form

\begin{equation}\label{eq01}
       \begin{cases}^{C}D^{i,\kappa}_{0+}u(t)+f(t,u(t))=0,\  t \in (0,1),\\
    u(0) = u(1) =0,
       \end{cases}
       \end{equation}
and
\begin{equation}\label{eq02}
       \begin{cases}^{C}D^{i,\kappa}_{0+}u(t)+f(t,u(t))=0,\  t \in (0,1),\\
    u(0)+u^{\prime}(0) =0,\ u(1)+u^{\prime}(1) =0,
       \end{cases}
       \end{equation}
where $1<i\leq 2$, and $^{C}D^{i,\kappa}_{0+}$ is the $\kappa$-fractional derivative of order $i$ in the sense
 of $\kappa$-Caputo operator, $f:[0,1]\times\mathbb{R}\rightarrow [0,\infty)$ is a continuous function.
 By employing the fixed point result of $\alpha-\phi$-Geraghty contractive type mappings, they obtained new results
 on the existence of positive solutions in $b$- metric spaces.\\

In \cite{Afshari4}, Afshari and Baleanu, investigated the Atangana–Baleanu fractional differential equations in the Caputo sense
\begin{equation}\label{eq03}
\begin{cases} (^{ABC}_{0}D^{i}u)(t)=f(t,u(t)),\ t \in (0,1),\\
u(0)=u_{0}
\end{cases}
       \end{equation}
where $^{ABC}_{0}D^{i}$ is the Atangana–Baleanu derivative in the Caputo sense of order $0<i\leq1$, and $f:[0,1]\times \mathcal{M}\rightarrow \mathbb{R}$ is continuous with $f(0,u(0))=0$ and $\mathcal{M}$ is a complete $b$-metric space.
The proof of main results are based upon some fixed point theorems for contractive mappings.

In \cite{Afshari9} Afshari et.al., considered the fractional boundary value problem
\begin{equation}\label{eq04}
\begin{cases} D^{i}u(t)=f(t,u(t)),\ t \in (0,1),\\
u(0)=u^{\prime}(0)=u(1)=u^{\prime}(1)=0,
\end{cases}
 \end{equation}

where $3<i\leq4$, $D^{i}$ is the classical Riemann–Liouville derivative, and $f:[0,1]\times \mathcal{X}\rightarrow\mathbb{R}$ is continuous, and $\mathcal{X}$ is a complete $b$-metric space. To obtain the existence of at least one solution, they used fixed point results of $\alpha-\psi$-Geraghty contractive type mappings.\\

Inspired and motiveted by the aforementioned works, in this paper, we are concerned with the following generalized fractional boundary value problem
 \begin{equation}\label{eq09}
  D^{\alpha,\varphi}u(t) + f(t,u(t)) = 0,\  t \in (0,1),
  \end{equation}
  \begin{equation}\label{eq010}
  u(0) = u^{\prime}(0) =0,\ u^{\prime}(1)=\beta u(\eta),
  \end{equation}
 where $2<\alpha\leq3$, $\beta\geq0$, $0<\eta\leq1$, and $f : [0, 1] \times {\mathbb{R}}\rightarrow {\mathbb{R}}$ is a continuous function,
$ D^{\alpha,\varphi}$ is the fractional derivative of order $\alpha$ in $\varphi$-Riemann-Liouville sense,
and $\varphi:[0,1]\rightarrow[0,1]$ is a strictly increasing function such that $ \varphi^{\prime}(x)\neq 0$ for
all $x\in[0,1]$. We use the technique of $\gamma-\psi$-GC type mappings and a modified version of contraction principle to investigate the existence of positive solutions for the fractional BVP \eqref{eq09}-\eqref{eq010}.\\

This paper is organized as follows. In section 2, we present some theorems and lemmas that will be used to prove our main results. In section 3, we investigate the existence and uniqueness of solutions for \eqref{eq09}-\eqref{eq010} in $b$-metric space. Our tools here are generalized Banach contraction principle and the technique of the $\gamma$–$\psi$-Geraghty contractive type mappings. Finally, we give two examples to illustrate our results in section 4.

 \section{Preliminaries}

\begin{definition}\label{defp1.1}\cite{Kilbas,Samko}
Let $\alpha>0$, $u:[a,b]\rightarrow\mathbb{R}$ be an integrable function and $\varphi\in C^{n}[a,b]$ an
increasing function such that $\varphi^{\prime}(t)\neq0$, for all $t\in[a, b]$.\\
The $\varphi$-Riemann–Liouville fractional integral of $u$ of order $\alpha$ is defined as follows:
\begin{equation*}
I_{a+}^{\alpha,\varphi}u(t)=\frac{1}{\Gamma(\alpha)}\int_{a}^{t}\varphi^{\prime}(s)
(\varphi(t)-\varphi(s))^{\alpha-1}u(s)ds,
\end{equation*}
and the $\varphi$-Riemann–Liouville fractional derivative of $u$ of order $\alpha$, with $n=[\alpha]+1$, is defined as follows:
\begin{eqnarray*}
D_{a+}^{\alpha,\varphi}u(t)&=&\Big(\frac{1}{\varphi^{\prime}(t)}\frac{d}{dt}\Big)^{n}
I_{a+}^{n-\alpha,\varphi}u(t)\\
&=&\frac{1}{\Gamma(n-\alpha)}\Big(\frac{1}{\varphi^{\prime}(t)}\frac{d}{dt}\Big)^{n}
\int_{a}^{t}\varphi^{\prime}(s)
(\varphi(t)-\varphi(s))^{n-\alpha-1}u(s)ds.
\end{eqnarray*}
\end{definition}

\begin{lemma}\label{lemp1.1} \cite{Kilbas}
Let $\alpha,\beta>0$ and $u:[a,b]\rightarrow\mathbb{R}$ be an integrable function. Then we have $D_{a+}^{\alpha,\varphi}I_{a+}^{\alpha,\varphi}u(t)=u(t)$ and
 $I_{a+}^{\alpha,\varphi}I_{a+}^{\beta,\varphi}u(t)=I_{a+}^{\alpha+\beta,\varphi}u(t)$.
\end{lemma}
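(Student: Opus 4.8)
The plan is to prove both identities by reducing the $\varphi$-Riemann–Liouville operators to the classical Riemann–Liouville operators through the change of variable $\xi=\varphi(s)$. Since $\varphi$ is strictly increasing with $\varphi'\neq 0$ and $\varphi\in C^{n}$, it admits a $C^{n}$ inverse; setting $g(\xi)=u(\varphi^{-1}(\xi))$, the substitution $\xi=\varphi(s)$, $d\xi=\varphi'(s)\,ds$ turns the defining integral into
\begin{equation*}
I_{a+}^{\alpha,\varphi}u(t)=\frac{1}{\Gamma(\alpha)}\int_{\varphi(a)}^{\varphi(t)}(\varphi(t)-\xi)^{\alpha-1}g(\xi)\,d\xi=\bigl(I_{\varphi(a)+}^{\alpha}g\bigr)(\varphi(t)),
\end{equation*}
where $I^{\alpha}$ is the ordinary Riemann–Liouville integral. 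Thus $I_{a+}^{\alpha,\varphi}$ is intertwined with $I^{\alpha}$ by the pullback along $\varphi$, and the classical semigroup law $I^{\alpha}I^{\beta}g=I^{\alpha+\beta}g$ transfers at once to the second identity $I_{a+}^{\alpha,\varphi}I_{a+}^{\beta,\varphi}u=I_{a+}^{\alpha+\beta,\varphi}u$.

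For a self-contained argument I would instead establish the semigroup property directly: writing out the iterated integral, I would interchange the order of integration over the region $a\le\tau\le s\le t$ by Fubini's theorem, and then evaluate the resulting inner integral by the substitution $\varphi(s)=\varphi(\tau)+(\varphi(t)-\varphi(\tau))w$. This collapses the inner integral to the Beta integral $\int_0^1 w^{\beta-1}(1-w)^{\alpha-1}\,dw=\Gamma(\alpha)\Gamma(\beta)/\Gamma(\alpha+\beta)$, leaving exactly $I_{a+}^{\alpha+\beta,\varphi}u(t)$ once the Gamma factors cancel.

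For the first identity, the key observation is that the operator $\tfrac{1}{\varphi'(t)}\tfrac{d}{dt}$ is intertwined with $\tfrac{d}{d\xi}$ under the same pullback: by the chain rule, if $F(t)=G(\varphi(t))$ then $\tfrac{1}{\varphi'(t)}F'(t)=G'(\varphi(t))$. Consequently the $\varphi$-derivative of order $\alpha$ corresponds to the classical Riemann–Liouville derivative of $g$ evaluated at $\varphi(t)$, and $D_{a+}^{\alpha,\varphi}I_{a+}^{\alpha,\varphi}u(t)$ reduces to $(D^{\alpha}I^{\alpha}g)(\varphi(t))=g(\varphi(t))=u(t)$. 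Alternatively, using the semigroup law with $n=[\alpha]+1$ gives $I_{a+}^{n-\alpha,\varphi}I_{a+}^{\alpha,\varphi}u=I_{a+}^{n,\varphi}u$, after which it suffices to verify the elementary relation $\tfrac{1}{\varphi'(t)}\tfrac{d}{dt}I_{a+}^{k,\varphi}u=I_{a+}^{k-1,\varphi}u$ for $1\le k\le n$ and iterate $n$ times down to $I_{a+}^{0,\varphi}u=u$.

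The routine parts are the differentiation under the integral sign and the Gamma–Beta bookkeeping. The main point requiring care is justifying the interchange of differentiation and the fractional integral — equivalently, controlling the boundary contribution at $s=t$ when $\tfrac{d}{dt}$ is brought inside $I_{a+}^{k,\varphi}$, which vanishes precisely for $k\ge 2$ — and this is where the hypotheses $u\in L^{1}$ and $\varphi\in C^{n}$ with $\varphi'\neq 0$ are genuinely used.
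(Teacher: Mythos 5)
Your proposal is correct. Note that the paper itself offers no proof of this lemma: it is stated as a quoted result with a citation to Kilbas et al., so there is no internal argument to compare against. Your main route --- conjugating the $\varphi$-operators with the classical Riemann--Liouville operators via the pullback $\xi=\varphi(s)$, $g=u\circ\varphi^{-1}$, so that $I_{a+}^{\alpha,\varphi}u=(I_{\varphi(a)+}^{\alpha}g)\circ\varphi$ and $\tfrac{1}{\varphi'}\tfrac{d}{dt}$ intertwines with $\tfrac{d}{d\xi}$ --- is in fact the standard argument in the cited literature for fractional calculus with respect to a function, and it transfers both the semigroup law and $D^{\alpha}I^{\alpha}g=g$ (a.e., which is all the $L^{1}$ hypothesis allows) correctly. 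Your self-contained alternatives are also sound: the Fubini--Beta computation for the semigroup property goes through verbatim with the kernel $(\varphi(t)-\varphi(\tau))$ in place of $(t-\tau)$, and the reduction $D^{\alpha,\varphi}I^{\alpha,\varphi}u=\bigl(\tfrac{1}{\varphi'}\tfrac{d}{dt}\bigr)^{n}I^{n,\varphi}u$ followed by the stepwise identity $\tfrac{1}{\varphi'}\tfrac{d}{dt}I^{k,\varphi}u=I^{k-1,\varphi}u$ is valid, with your observation about the vanishing boundary term for $k\geq 2$ (and the Lebesgue differentiation theorem at the final step $k=1$) being exactly the right point of care.
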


\begin{lemma}\label{lemp1.2}
Let $\alpha>0$. Assume that $u\in C(a,b)\cap L(a,b)$, then the fractional differential equation
 $D_{a+}^{\alpha,\varphi}u(t)=0$ has a unique solution
 \[u(t)=c_{1}[\varphi(t)-\varphi(a)]^{\alpha-1}+c_{2}[\varphi(t)-\varphi(a)]^{\alpha-2}+
 \ldots+c_{n}[\varphi(t)-\varphi(a)]^{\alpha-n},\]
 Moreover, if $u,D_{a+}^{\alpha,\varphi}u\in C(a,b)\cap L(a,b)$, then
 \[I_{a+}^{\alpha,\varphi}D_{a+}^{\alpha,\varphi}u(t)=u(t)+c_{1}[\varphi(t)-\varphi(a)]^{\alpha-1}+c_{2}[\varphi(t)-\varphi(a)]^{\alpha-2}+
 \ldots+c_{n}[\varphi(t)-\varphi(a)]^{\alpha-n},\]
 where $c_{i}\in\mathbb{R}, i=1,2,\ldots,n.$
\end{lemma}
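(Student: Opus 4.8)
The plan is to reduce the whole statement to the classical Riemann-Liouville case through the change of variable $\tau=\varphi(t)$. Since $\varphi\in C^{n}[a,b]$ is strictly increasing with $\varphi'(t)\neq0$, it is a $C^{n}$-diffeomorphism of $[a,b]$ onto $[\varphi(a),\varphi(b)]$, so the inverse $\varphi^{-1}$ exists and is of class $C^{n}$. For the given function $u$ I introduce $v:=u\circ\varphi^{-1}$, so that $u(t)=v(\varphi(t))$.

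First I would show that the $\varphi$-operators are ordinary Riemann-Liouville operators in disguise. Substituting $\sigma=\varphi(s)$ in the defining integral yields
\[
I_{a+}^{\alpha,\varphi}u(t)=\frac{1}{\Gamma(\alpha)}\int_{\varphi(a)}^{\varphi(t)}(\varphi(t)-\sigma)^{\alpha-1}v(\sigma)\,d\sigma=\bigl(I_{\varphi(a)+}^{\alpha}v\bigr)(\varphi(t)),
\]
where $I_{\varphi(a)+}^{\alpha}$ is the classical Riemann-Liouville integral on $[\varphi(a),\varphi(b)]$. For the derivative, the chain rule gives $\frac{1}{\varphi'(t)}\frac{d}{dt}\,g(\varphi(t))=g'(\varphi(t))$, whence $\bigl(\frac{1}{\varphi'(t)}\frac{d}{dt}\bigr)^{n}g(\varphi(t))=g^{(n)}(\varphi(t))$; taking $g=I_{\varphi(a)+}^{n-\alpha}v$ and combining with the previous identity produces
\[
D_{a+}^{\alpha,\varphi}u(t)=\bigl(D_{\varphi(a)+}^{\alpha}v\bigr)(\varphi(t)).
\]

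With these two identities the result follows from the corresponding classical statements (see Kilbas et al.). As $\varphi$ is a bijection, $D_{a+}^{\alpha,\varphi}u(t)=0$ on $[a,b]$ is equivalent to $D_{\varphi(a)+}^{\alpha}v(\tau)=0$ on $[\varphi(a),\varphi(b)]$, whose general solution is $v(\tau)=\sum_{k=1}^{n}c_{k}(\tau-\varphi(a))^{\alpha-k}$; substituting $\tau=\varphi(t)$ and recalling $v(\varphi(t))=u(t)$ gives the claimed form of $u$. Similarly, the classical composition identity $I_{\varphi(a)+}^{\alpha}D_{\varphi(a)+}^{\alpha}v(\tau)=v(\tau)+\sum_{k=1}^{n}c_{k}(\tau-\varphi(a))^{\alpha-k}$ (with $c_{k}\in\mathbb{R}$), evaluated at $\tau=\varphi(t)$ through the two identities above, yields the stated expression for $I_{a+}^{\alpha,\varphi}D_{a+}^{\alpha,\varphi}u$.

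The only delicate point I expect is the derivative identity $D_{a+}^{\alpha,\varphi}u=(D_{\varphi(a)+}^{\alpha}v)\circ\varphi$: one has to verify carefully that the iterated operator $\bigl(\frac{1}{\varphi'}\frac{d}{dt}\bigr)^{n}$ collapses exactly to $\frac{d^{n}}{d\tau^{n}}$ under the change of variable, and it is precisely here that the hypotheses $\varphi\in C^{n}$ and $\varphi'\neq0$ are needed. Once this is established, no genuine fractional computation remains---everything is inherited verbatim from the classical Riemann-Liouville theory.
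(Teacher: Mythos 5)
The paper does not prove this lemma at all: it is stated as a known background result (the $\varphi$-analogue of the classical Riemann--Liouville facts in Kilbas et al.), so there is no in-paper argument to compare yours against. Your proof is correct, and it is in fact the standard way this result is established in the literature on fractional operators with respect to a function: the substitution $\sigma=\varphi(s)$ gives $I_{a+}^{\alpha,\varphi}u=\bigl(I_{\varphi(a)+}^{\alpha}v\bigr)\circ\varphi$ with $v=u\circ\varphi^{-1}$, the chain-rule identity $\bigl(\tfrac{1}{\varphi'}\tfrac{d}{dt}\bigr)^{n}(g\circ\varphi)=g^{(n)}\circ\varphi$ gives $D_{a+}^{\alpha,\varphi}u=\bigl(D_{\varphi(a)+}^{\alpha}v\bigr)\circ\varphi$, and both assertions then transfer verbatim from the classical case, exactly as you say. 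Two bookkeeping points deserve to be made explicit if you write this up. First, the classical lemma is applied to $v$ on $(\varphi(a),\varphi(b))$, so you should note that $u\in C(a,b)\cap L(a,b)$ implies $v\in C(\varphi(a),\varphi(b))\cap L(\varphi(a),\varphi(b))$; this is where you use that $\varphi'$ is continuous and bounded away from zero on the compact interval, so the change of variables preserves integrability. Second, in the classical composition identity the constants are not free parameters: one has $I_{\varphi(a)+}^{\alpha}D_{\varphi(a)+}^{\alpha}v(\tau)=v(\tau)-\sum_{j=1}^{n}\frac{(I_{\varphi(a)+}^{n-\alpha}v)^{(n-j)}(\varphi(a)+)}{\Gamma(\alpha-j+1)}(\tau-\varphi(a))^{\alpha-j}$, so the $c_{j}$ are determined by $v$ (equivalently by $u$). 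This is harmless here, since the lemma only asserts that the difference $I_{a+}^{\alpha,\varphi}D_{a+}^{\alpha,\varphi}u-u$ lies in the span of the functions $[\varphi(t)-\varphi(a)]^{\alpha-j}$, but your phrasing ``with $c_{k}\in\mathbb{R}$'' should not be read as saying the identity holds for arbitrary constants.
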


 \begin{definition}\label{defp1.2} \cite{Demmaa,Jovanovic,Czerwik}
Let $X$ be a nonempty set and let $r\geq1$ be a given real number.
A function $d : X \times X \rightarrow[0, \infty)$ is said to be a $b$-metric if and only if for all
$x, y, z\in X$ the following conditions are satisfied:
\begin{itemize}
\item [(1)] $d(x,y)=0$ if and only if $x=y$;
\item [(2)] $d(x,y)=d(y,x)$;
\item [(3)]$d(x,z)\leq r[d(x,y)+d(y,z)]$.
\end{itemize}
Then $(X,d,r)$ is called a $b$-metric space with constant $r$. Obviously, for $r = 1$ one obtains a metric on $X$.
 \end{definition}

\begin{theorem}\label{thmp1} \cite{Demmaa,Jovanovic}
Let $(X,d,r)$ be a complete $b$-metric space and let $A : X\rightarrow X$ be a map such that for some $\lambda$, $0<\lambda<\frac{1}{r}$,
\[d(Ax,Ay)\leq \lambda d(x,y)\]
holds for all $x,y\in X$. Then $A$ has a unique fixed point $z$, and for every $x_{0}\in X$, the sequence $\{{A^{n}x_{0}}\}$ converges to $z$.
\end{theorem}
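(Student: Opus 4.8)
The plan is to run the classical Picard iteration and adapt each step to the weaker $b$-triangle inequality. Fix an arbitrary $x_0 \in X$ and set $x_n = A^n x_0$, so that $x_{n+1} = A x_n$. First I would establish the geometric decay of consecutive distances: applying the contraction hypothesis repeatedly gives $d(x_{n+1}, x_n) \le \lambda^n d(x_1, x_0)$ by a one-line induction on $n$.

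The crux of the argument is proving that $\{x_n\}$ is Cauchy, and this is precisely where the constant $r$ forces the hypothesis $\lambda < \frac{1}{r}$ rather than the familiar $\lambda < 1$. For $m > n$ I would iterate the inequality $d(x,z) \le r[d(x,y) + d(y,z)]$ along the chain $x_n, x_{n+1}, \dots, x_m$; peeling off one vertex at a time multiplies the accumulated coefficient by $r$ at each stage, yielding a bound of the form $d(x_n, x_m) \le \sum_{k=n}^{m-1} r^{k-n+1} d(x_k, x_{k+1})$. Substituting the decay estimate and reindexing, the right-hand side is dominated by $d(x_1, x_0)\, r \lambda^n \sum_{j\ge 0}(r\lambda)^j$. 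Here the assumption $\lambda < \frac{1}{r}$ is exactly what makes the ratio $r\lambda < 1$, so the geometric series converges and the whole bound is at most $\frac{r\lambda^n}{1 - r\lambda}\, d(x_1,x_0)$, which tends to $0$ uniformly in $m$ as $n \to \infty$. Hence $\{x_n\}$ is Cauchy, and by completeness it converges to some $z \in X$.

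It then remains to show that $z$ is a fixed point and that it is unique. A subtlety worth flagging is that a $b$-metric need not be continuous, so I cannot simply pass to the limit inside $d(\cdot,\cdot)$; instead I would estimate $d(z, Az) \le r[d(z, x_{n+1}) + d(A x_n, A z)] \le r[d(z, x_{n+1}) + \lambda\, d(x_n, z)]$ and let $n \to \infty$, so that both terms on the right vanish and $d(z, Az) = 0$, i.e. $Az = z$. Uniqueness is immediate: if $Az = z$ and $Aw = w$, then $d(z,w) = d(Az, Aw) \le \lambda\, d(z,w)$ with $\lambda < 1$, forcing $d(z,w) = 0$ and hence $z = w$. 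The only genuinely delicate point is the Cauchy estimate, where one must track the powers of $r$ carefully and exploit $r\lambda < 1$; every other step mirrors the classical Banach contraction argument.
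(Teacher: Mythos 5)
The paper offers no proof of this theorem to compare against: it is stated as a known result, with citations to Demma--Saadati--Vetro and Jovanovi\'c--Kadelburg--Radenovi\'c, and is invoked as a black box in the proof of Theorem 3.1. Your argument is correct and complete, and it is the standard one: the geometric decay of consecutive distances; the chained $b$-triangle estimate $d(x_n,x_m)\le\sum_{k=n}^{m-1}r^{k-n+1}d(x_k,x_{k+1})$ (a valid over-estimate, since $r\ge1$ lets you replace the coefficient $r^{m-n-1}$ of the final link by $r^{m-n}$); the geometric-series bound $\frac{r\lambda^{n}}{1-r\lambda}\,d(x_1,x_0)$, which is exactly where $r\lambda<1$ enters; the indirect estimate $d(z,Az)\le r\big[d(z,x_{n+1})+\lambda d(x_n,z)\big]$, which correctly sidesteps the possible discontinuity of a $b$-metric; and uniqueness from $\lambda<1/r\le1$. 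One observation worth keeping in mind: the hypothesis $\lambda<1/r$ is an artifact of this chaining method rather than of the theorem itself; with a more careful Cauchy estimate (for instance the dyadic regrouping argument of Miculescu--Mihail) the conclusion holds for every $\lambda<1$. Since the paper only ever needs the weaker $\lambda<1/r$ version (condition \eqref{eq3.2} in Theorem 3.1 forces its contraction constant below $\tfrac12=\tfrac1r$), your proof fully suffices for its purposes.
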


Let $\Psi$ be the set of all increasing continuous functions $\psi : [0,\infty) \rightarrow [0,\infty)$ such that
$\psi(0)=0$, $\psi(\tau x)\leq \tau\psi(x)\leq \tau x$ for $\tau > 1$, and let $\Theta$ be the family of nondecreasing functions
 $\theta:[0,\infty)\rightarrow [0,\frac{1}{r^{2}})$ for some $r \geq1$.

 \begin{definition}\label{defp1.3} \cite{Afshari1}
 Let $(X,d,r)$ be a $b$-metric space. An operator $A:X\rightarrow X$ is called a generalized $\gamma$-$\psi$-Geraphty mapping whenever there exists $\gamma:X\times X\rightarrow [0,\infty)$ such that
 \[\gamma(x,y)\psi\big(r^{3}d(Ax,Ay)\big)\leq \theta \big(\psi(d(x,y))\big)\psi(d(x,y)),\]
 for all $x,y \in X$, where $\theta\in\Theta$ and $\psi\in\Psi$.
  \end{definition}

  \begin{definition}\label{defp1} \cite{Afshari1,Samet}
For $X\neq\emptyset$, let $A:X\rightarrow X$ and $\gamma:X\times X\rightarrow [0,\infty)$ be given mappings. Then $A$ is called $\gamma$-admissible if for $x,y \in X$, we have
\[\gamma(x,y)\geq1 \Rightarrow \gamma(Ax,Ay)\geq1.\]
  \end{definition}

    \begin{theorem}\label{thmp2} \cite{Afshari1}
    Let $(X,d,r)$ be a complete $b$-metric space, and let $A:X\rightarrow X$  be a
generalized $\gamma$-$\psi$-Geraghty mapping such that
\begin{itemize}
\item [(i)] $A$ is $\gamma$-admissible,
\item [(ii)] there exists $x_{0}\in X$ such that $\gamma(x_{0},Ax_{0})\geq1$,
\item [(iii)] if $\{x_{n}\}\subseteq X$ with $x_{n}\rightarrow x$ in $X$, and $\gamma(x_{n},x_{n+1})\geq1$, then  $\gamma(x_{n},x)\geq1$.
\end{itemize}
Then $A$ has a fixed point.
      \end{theorem}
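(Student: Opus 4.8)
The plan is to run the Picard iteration from the seed point supplied by hypothesis~(ii) and to show that the resulting orbit is Cauchy, its limit being the desired fixed point. First I would fix $x_{0}\in X$ with $\gamma(x_{0},Ax_{0})\geq1$ and set $x_{n+1}=Ax_{n}$, writing $d_{n}:=d(x_{n},x_{n+1})$. If $x_{n_{0}}=x_{n_{0}+1}$ for some $n_{0}$, then $x_{n_{0}}$ is already a fixed point and nothing remains to be shown, so I may assume $d_{n}>0$ for all $n$. Since $A$ is $\gamma$-admissible and $\gamma(x_{0},x_{1})\geq1$, an immediate induction propagates this along the orbit, giving $\gamma(x_{n},x_{n+1})\geq1$ for every $n$.

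Next I would feed the orbit into the defining contractive inequality. Because $\gamma(x_{n-1},x_{n})\geq1$ and $r^{3}\geq1$, monotonicity of $\psi$ yields
\[
\psi(d_{n})\leq\psi\big(r^{3}d_{n}\big)=\psi\big(r^{3}d(Ax_{n-1},Ax_{n})\big)\leq\theta\big(\psi(d_{n-1})\big)\psi(d_{n-1})\leq\frac{1}{r^{2}}\psi(d_{n-1})\leq\psi(d_{n-1}),
\]
so $\{d_{n}\}$ is strictly decreasing (using that $\psi$ is increasing). Hence $\psi(d_{n-1})\leq\psi(d_{0})$, and since $\theta$ is nondecreasing, $\theta(\psi(d_{n-1}))\leq\theta(\psi(d_{0}))=:\lambda<\frac{1}{r^{2}}$. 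Substituting back gives the geometric control $\psi(d_{n})\leq\lambda\,\psi(d_{n-1})\leq\cdots\leq\lambda^{n}\psi(d_{0})$.

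The main obstacle is the Cauchy property, because in a $b$-metric space the triangle inequality carries the constant $r$, so telescoping $d(x_{n},x_{m})$ produces coefficients $r,r^{2},r^{3},\dots$ that grow with the number of steps. To overcome this I would convert the bound on $\psi(d_{n})$ into a bound on $d_{n}$ itself: the structural property $\psi(\tau x)\leq\tau\psi(x)$ for $\tau>1$ inverts (choosing $\tau=\lambda^{-n}$, with $\psi$ strictly increasing and hence invertible) to $\psi^{-1}(\lambda^{n}y)\leq\lambda^{n}\psi^{-1}(y)$, whence $d_{n}\leq\lambda^{n}d_{0}$. Since $\lambda<\frac{1}{r^{2}}$ we have $r\lambda<\frac{1}{r}\leq1$, so for $m>n$ the telescoped estimate
\[
d(x_{n},x_{m})\leq\sum_{i=n}^{m-1}r^{\,i-n+1}d_{i}\leq r^{1-n}d_{0}\sum_{i=n}^{\infty}(r\lambda)^{i}=\frac{r\,\lambda^{n}}{1-r\lambda}\,d_{0}\longrightarrow0
\]
as $n\to\infty$. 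Thus $\{x_{n}\}$ is Cauchy, and by completeness $x_{n}\to x^{*}$ for some $x^{*}\in X$.

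Finally I would identify $x^{*}$ as a fixed point. Hypothesis~(iii), applied to the orbit with $\gamma(x_{n},x_{n+1})\geq1$, yields $\gamma(x_{n},x^{*})\geq1$, so the contractive inequality gives
\[
\psi\big(r^{3}d(x_{n+1},Ax^{*})\big)=\psi\big(r^{3}d(Ax_{n},Ax^{*})\big)\leq\theta\big(\psi(d(x_{n},x^{*}))\big)\psi(d(x_{n},x^{*}))\leq\frac{1}{r^{2}}\psi(d(x_{n},x^{*})).
\]
Since $d(x_{n},x^{*})\to0$ and $\psi$ is continuous with $\psi(0)=0$, the right-hand side tends to $0$, so $d(x_{n+1},Ax^{*})\to0$, i.e. $x_{n+1}\to Ax^{*}$. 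As limits are unique in a $b$-metric space (from $d(x^{*},Ax^{*})\leq r[d(x^{*},x_{n+1})+d(x_{n+1},Ax^{*})]\to0$), we conclude $Ax^{*}=x^{*}$, which completes the argument.
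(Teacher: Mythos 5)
Your proposal is correct, but note that the paper itself offers no proof to compare against: Theorem \ref{thmp2} is quoted from \cite{Afshari1} as a known preliminary result. Your argument is essentially the standard one used in that literature --- Picard iteration from the $x_{0}$ of hypothesis (ii), propagation of $\gamma(x_{n},x_{n+1})\geq 1$ by $\gamma$-admissibility, a geometric estimate that defeats the $b$-metric constant in the telescoping step, and identification of the limit as a fixed point via hypothesis (iii) --- and all the steps check out. Two details are worth flagging. First, your key quantitative move, converting $\psi(d_{n})\leq\lambda^{n}\psi(d_{0})$ into $d_{n}\leq\lambda^{n}d_{0}$, is valid: taking $\tau=\lambda^{-n}>1$ and $x=\lambda^{n}u$ in the property $\psi(\tau x)\leq\tau\psi(x)$ gives $\lambda^{n}\psi(u)\leq\psi(\lambda^{n}u)$, hence $\psi(d_{n})\leq\psi(\lambda^{n}d_{0})$ and then $d_{n}\leq\lambda^{n}d_{0}$; but this last deduction needs $\psi$ to be \emph{strictly} increasing (as does your step $\psi(d_{n})<\psi(d_{n-1})\Rightarrow d_{n}<d_{n-1}$), which is the intended reading of ``increasing'' in the definition of $\Psi$, so you should state that reading explicitly. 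Second, your use of the monotonicity of $\theta$ to freeze $\lambda:=\theta(\psi(d_{0}))<\frac{1}{r^{2}}$ is exactly what yields geometric decay uniformly in $r\geq1$ (the crude bound $\theta<\frac{1}{r^{2}}$ alone would not suffice when $r=1$); note also that under your standing assumption $d_{n}>0$ one automatically has $\lambda>0$, so $\tau=\lambda^{-n}$ is well defined and no degenerate case is lost.
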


For convenience, we denote
$$ \mu= (\alpha-1)\varphi^{\prime}(1)[\Phi(1)]^{\alpha-2}-\beta[\Phi(\eta)]^{\alpha-1},$$ $$\Lambda(t)=\frac{\Phi(s)}{\Phi(t)}$$ with $$\Phi(t)=\varphi(t)-\varphi(0).$$
 
\begin{lemma}\label{lem1.3}
Let $\mu\neq0$ and $2<\alpha\leq3$. Then, for any $ h \in C([0,1],\mathbb{R})$, the fractional boundary value problem
\begin{equation}\label{eq1}
D^{\alpha,\varphi}u(t) + h(t) = 0,\  t \in (0,1),
\end{equation}
\begin{equation}\label{eq2}
 u(0) = u^{\prime}(0) =0,\ u^{\prime}(1)=\beta u(\eta) ,
 \end{equation}
has an integral solution given by
$$ u(t) = \int_{0}^{1} \mathcal{ G}(t,s)\varphi^{\prime}(s)h(s) ds,$$
where
\begin{equation}\label{eq3}
\mathcal{G}(t,s)=\frac{1}{\mu\Gamma(\alpha)} \begin{cases} [\Phi(t)]^{\alpha-1}\big[(\alpha-1)\varphi^{\prime}(1)(\varphi(1)-\varphi(s))^{\alpha-2}
-\beta(\varphi(\eta)-\varphi(s))^{\alpha-1}\big]\\
-\mu(\varphi(t)-\varphi(s))^{\alpha-1}, & s \leq \min\{\eta,t\}, \\
 [\Phi(t)]^{\alpha-1}\big[(\alpha-1)\varphi^{\prime}(1)(\varphi(1)-\varphi(s))^{\alpha-2}
-\beta(\varphi(\eta)-\varphi(s))^{\alpha-1}\big], & t \leq s \leq \eta,\\
[\Phi(t)]^{\alpha-1}(\alpha-1)\varphi^{\prime}(1)(\varphi(1)-\varphi(s))^{\alpha-2}
-\mu(\varphi(t)-\varphi(s))^{\alpha-1},&\eta \leq s \leq t,\\
[\Phi(t)]^{\alpha-1}(\alpha-1)\varphi^{\prime}(1)(\varphi(1)-\varphi(s))^{\alpha-2}, &\max\{\eta,t\}\leq s.
\end{cases}
\end{equation}
\end{lemma}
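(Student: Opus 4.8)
The plan is to solve the fractional differential equation \eqref{eq1} by applying the fractional integral operator $I_{0+}^{\alpha,\varphi}$ to both sides, using Lemma \ref{lemp1.1} and Lemma \ref{lemp1.2}, and then to pin down the three constants of integration using the boundary conditions \eqref{eq2}. Concretely, since $2<\alpha\leq 3$ we have $n=3$, so by Lemma \ref{lemp1.2} the general solution of $D^{\alpha,\varphi}u(t)=-h(t)$ takes the form
\[
u(t) = -I_{0+}^{\alpha,\varphi}h(t) + c_{1}[\Phi(t)]^{\alpha-1} + c_{2}[\Phi(t)]^{\alpha-2} + c_{3}[\Phi(t)]^{\alpha-3},
\]
where $\Phi(t)=\varphi(t)-\varphi(0)$ and the fractional integral is explicit from Definition \ref{defp1.1}. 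This reduces the problem to determining $c_{1},c_{2},c_{3}$.

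Next I would impose the boundary conditions one at a time. Since $\alpha-3\leq 0$ and $\alpha-2\leq 1$, the condition $u(0)=0$ forces the most singular term to vanish, giving $c_{3}=0$; here one uses $\Phi(0)=0$ and that $I_{0+}^{\alpha,\varphi}h$ vanishes at $t=0$. To apply the remaining two conditions I need $u^{\prime}(t)$, which requires differentiating $[\Phi(t)]^{\alpha-1}$, $[\Phi(t)]^{\alpha-2}$ and the integral term; differentiating the $\varphi$-Riemann--Liouville integral produces a factor $(\alpha-1)$ together with $\varphi^{\prime}(t)$ and lowers the kernel exponent by one, so one obtains
\[
u^{\prime}(t) = -\frac{\varphi^{\prime}(t)}{\Gamma(\alpha-1)}\int_{0}^{t}\varphi^{\prime}(s)(\varphi(t)-\varphi(s))^{\alpha-2}h(s)\,ds + c_{1}(\alpha-1)\varphi^{\prime}(t)[\Phi(t)]^{\alpha-2} + c_{2}(\alpha-2)\varphi^{\prime}(t)[\Phi(t)]^{\alpha-3}.
\]
The condition $u^{\prime}(0)=0$ then kills the $c_{2}$ term (again using $\Phi(0)=0$ and $\alpha-3\leq 0$), so $c_{2}=0$. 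Finally the condition $u^{\prime}(1)=\beta u(\eta)$ yields a single linear equation for $c_{1}$, whose coefficient is precisely $\mu = (\alpha-1)\varphi^{\prime}(1)[\Phi(1)]^{\alpha-2}-\beta[\Phi(\eta)]^{\alpha-1}$; since $\mu\neq 0$ by hypothesis, this determines $c_{1}$ uniquely.

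With $c_{1}$ in hand I would substitute back into the expression for $u(t)$, write $c_{1}[\Phi(t)]^{\alpha-1}$ as a single integral against $\varphi^{\prime}(s)h(s)$ using the integral formulas for $u^{\prime}(1)$ and $u(\eta)$, and combine it with $-I_{0+}^{\alpha,\varphi}h(t)$ to obtain $u(t)=\int_{0}^{1}\mathcal{G}(t,s)\varphi^{\prime}(s)h(s)\,ds$. The four cases in \eqref{eq3} arise from splitting the domain of integration according to the relative order of $s$, $t$, and $\eta$: the term $-\mu(\varphi(t)-\varphi(s))^{\alpha-1}$ appears only when $s\leq t$ (the range where the $-I_{0+}^{\alpha,\varphi}h$ integral contributes), and the $\beta(\varphi(\eta)-\varphi(s))^{\alpha-1}$ term (coming from $u(\eta)$) appears only when $s\leq\eta$; the four combinations of these two conditions give exactly the four branches of the piecewise definition.

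The main obstacle I anticipate is purely computational bookkeeping rather than conceptual: carefully differentiating the $\varphi$-fractional integral to get the correct constants and, above all, correctly splitting the integration range so that each of the four cases in \eqref{eq3} carries exactly the right combination of the $(\varphi(1)-\varphi(s))^{\alpha-2}$, $(\varphi(\eta)-\varphi(s))^{\alpha-1}$, and $(\varphi(t)-\varphi(s))^{\alpha-1}$ terms. One must track which of $\{s\leq t\}$ and $\{s\leq\eta\}$ hold in each region, since the coefficient $c_{1}$ bundles contributions from both $u^{\prime}(1)$ (always an integral over all of $[0,1]$) and $u(\eta)$ (an integral over $[0,\eta]$), and any sign or range error propagates into a wrong Green's function.
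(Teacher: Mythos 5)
Your proposal is correct and follows essentially the same route as the paper's proof: both invoke Lemma \ref{lemp1.2} to write $u(t)=-I_{0+}^{\alpha,\varphi}h(t)+c_{1}[\Phi(t)]^{\alpha-1}+c_{2}[\Phi(t)]^{\alpha-2}+c_{3}[\Phi(t)]^{\alpha-3}$, use $u(0)=u^{\prime}(0)=0$ to force $c_{2}=c_{3}=0$, solve $u^{\prime}(1)=\beta u(\eta)$ for $c_{1}$ (with coefficient $\mu\neq0$), and recover the four branches of $\mathcal{G}(t,s)$ by splitting the integration range according to whether $s\leq t$ and $s\leq\eta$. Your explicit formula for $u^{\prime}(t)$ and the resulting value of $c_{1}$ agree with the paper's (note $(\alpha-1)/\Gamma(\alpha)=1/\Gamma(\alpha-1)$), so the only difference is bookkeeping detail, not substance.
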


\begin{proof}
From Lemma \ref{lemp1.2}, we may reduce \eqref{eq1}to an equivalent integral equation
\begin{eqnarray} \label{eq4}
u(t) & =&  -\frac{1}{\Gamma(\alpha)} \int_{0}^{t}\varphi^{\prime}(s)(\varphi(t)-\varphi(s))^{\alpha-1}h(s)ds \nonumber\\ &&+c_{1}(\varphi(t)-\varphi(0))^{\alpha-1}+c_{2}(\varphi(t)-\varphi(0))^{\alpha-2}+c_{3}(\varphi(t)-\varphi(0))^{\alpha-3},
\end{eqnarray}
where $c_{1}, c_{2}, c_{3}\in \mathbb{R}$ are arbitrary constants.\\
By using the boundary conditions $ u(0) = u^{\prime}(0) =0$, we get $c_{2}=c_{3}=0$. Then, Eq.\eqref{eq4} takes the following form
\begin{equation} \label{eq5}
u(t) =-\frac{1}{\Gamma(\alpha)} \int_{0}^{t}\varphi^{\prime}(s)(\varphi(t)-\varphi(s))^{\alpha-1}h(s)ds+c_{1}(\varphi(t)-\varphi(0))^{\alpha-1}.
\end{equation}
From $u^{\prime}(1)=\beta u(\eta)$, we have
\begin{multline*}
(\alpha-1)c_{1}\varphi^{\prime}(1)[\Phi(1)]^{\alpha-2}-\frac{\alpha-1}{\Gamma(\alpha)}
\int_{0}^{1}\varphi^{\prime}(s)\varphi^{\prime}(1)(\varphi(1)-\varphi(s))^{\alpha-2}h(s)ds=\beta c_{1}[\Phi(\eta)]^{\alpha-1}\\
-\frac{\beta}{\Gamma(\alpha)}
\int_{0}^{\eta}\varphi^{\prime}(s)(\varphi(\eta)-\varphi(s))^{\alpha-1}h(s)ds
\end{multline*}

So
\begin{equation*}
c_{1}=\frac{\alpha-1}{\mu\Gamma(\alpha)}
\int_{0}^{1}\varphi^{\prime}(s)\varphi^{\prime}(1)(\varphi(1)-\varphi(s))^{\alpha-2}h(s)ds
-\frac{\beta}{\mu\Gamma(\alpha)}\int_{0}^{\eta}\varphi^{\prime}(s)(\varphi(\eta)-\varphi(s))^{\alpha-1}h(s)ds
\end{equation*}
 Therefore, the solution of fractional boundary value problem \eqref{eq1}-\eqref{eq2} is
 \begin{eqnarray*}
u(t)& =& \frac{(\alpha-1)[\Phi(t)]^{\alpha-1}}{\mu\Gamma(\alpha)}
\int_{0}^{1}\varphi^{\prime}(s)\varphi^{\prime}(1)(\varphi(1)-\varphi(s))^{\alpha-2}h(s)ds\\
&&-\frac{\beta [\Phi(t)]^{\alpha-1}}{\mu\Gamma(\alpha)}\int_{0}^{\eta}\varphi^{\prime}(s)(\varphi(\eta)-\varphi(s))^{\alpha-1}h(s)ds \\
&&-\frac{1}{\Gamma(\alpha)}\int_{0}^{t}\varphi^{\prime}(s)(\varphi(t)-\varphi(s))^{\alpha-1}h(s)ds.
 \end{eqnarray*} 
For $t\leq\eta$, one has
\begin{eqnarray*}
u(t)& =& \frac{(\alpha-1)[\Phi(t)]^{\alpha-1}}{\mu\Gamma(\alpha)}
\int_{0}^{\eta}\varphi^{\prime}(s)\varphi^{\prime}(1)(\varphi(1)-\varphi(s))^{\alpha-2}h(s)ds\\
&&-\frac{\beta [\Phi(t)]^{\alpha-1}}{\mu\Gamma(\alpha)}\int_{0}^{\eta}\varphi^{\prime}(s)(\varphi(\eta)-\varphi(s))^{\alpha-1}h(s)ds \\
&&+\frac{(\alpha-1)[\Phi(t)]^{\alpha-1}}{\mu\Gamma(\alpha)}
\int_{\eta}^{1}\varphi^{\prime}(s)\varphi^{\prime}(1)(\varphi(1)-\varphi(s))^{\alpha-2}h(s)ds\\
&&-\frac{1}{\Gamma(\alpha)}\int_{0}^{t}\varphi^{\prime}(s)(\varphi(t)-\varphi(s))^{\alpha-1}h(s)ds\\
&=&\frac{(\alpha-1)[\Phi(t)]^{\alpha-1}}{\mu\Gamma(\alpha)}
\int_{0}^{t}\varphi^{\prime}(s)\varphi^{\prime}(1)(\varphi(1)-\varphi(s))^{\alpha-2}h(s)ds\\
&&+\frac{(\alpha-1)[\Phi(t)]^{\alpha-1}}{\mu\Gamma(\alpha)}
\int_{t}^{\eta}\varphi^{\prime}(s)\varphi^{\prime}(1)(\varphi(1)-\varphi(s))^{\alpha-2}h(s)ds\\
&&-\frac{\beta [\Phi(t)]^{\alpha-1}}{\mu\Gamma(\alpha)}\int_{0}^{t}\varphi^{\prime}(s)(\varphi(\eta)-\varphi(s))^{\alpha-1}h(s)ds \\
&&-\frac{\beta [\Phi(t)]^{\alpha-1}}{\mu\Gamma(\alpha)}\int_{t}^{\eta}\varphi^{\prime}(s)(\varphi(\eta)-\varphi(s))^{\alpha-1}h(s)ds\\ &&+\frac{(\alpha-1)[\Phi(t)]^{\alpha-1}}{\mu\Gamma(\alpha)}
\int_{\eta}^{1}\varphi^{\prime}(s)\varphi^{\prime}(1)(\varphi(1)-\varphi(s))^{\alpha-2}h(s)ds\\
&&-\frac{1}{\Gamma(\alpha)}\int_{0}^{t}\varphi^{\prime}(s)(\varphi(t)-\varphi(s))^{\alpha-1}h(s)ds
\end{eqnarray*}  
\begin{eqnarray*}&=&\frac{1}{\mu\Gamma(\alpha)}\int_{0}^{t} \Big([\Phi(t)]^{\alpha-1}\big[(\alpha-1)\varphi^{\prime}(1)(\varphi(1)-\varphi(s))^{\alpha-2}\\
&&-\beta(\varphi(\eta)-\varphi(s))^{\alpha-1}\big]
-\mu(\varphi(t)-\varphi(s))^{\alpha-1}\Big) \varphi^{\prime}(s)h(s)ds\\
&&+\int_{t}^{\eta}\frac{[\Phi(t)]^{\alpha-1}\big[(\alpha-1)\varphi^{\prime}(1)(\varphi(1)-\varphi(s))^{\alpha-2}
-\beta(\varphi(\eta)-\varphi(s))^{\alpha-1}\big]}{\mu\Gamma(\alpha)}\varphi^{\prime}(s)h(s)ds\\
&&+\int_{\eta}^{1}\frac{[\Phi(t)]^{\alpha-1}(\alpha-1)\varphi^{\prime}(1)(\varphi(1)-\varphi(s))^{\alpha-2}}
{\mu\Gamma(\alpha)}\varphi^{\prime}(s)h(s)ds\\
&=&\int_{0}^{1}\mathcal{G}(t,s)\varphi^{\prime}(s)h(s)ds.
\end{eqnarray*}
For $\eta\leq t$, one has
\begin{eqnarray*}
u(t)& =& \frac{(\alpha-1)[\Phi(t)]^{\alpha-1}}{\mu\Gamma(\alpha)}
\int_{0}^{t}\varphi^{\prime}(s)\varphi^{\prime}(1)(\varphi(1)-\varphi(s))^{\alpha-2}h(s)ds\\
&&+\frac{(\alpha-1)[\Phi(t)]^{\alpha-1}}{\mu\Gamma(\alpha)}
\int_{t}^{1}\varphi^{\prime}(s)\varphi^{\prime}(1)(\varphi(1)-\varphi(s))^{\alpha-2}h(s)ds\\
&&-\frac{\beta [\Phi(t)]^{\alpha-1}}{\mu\Gamma(\alpha)}\int_{0}^{\eta}\varphi^{\prime}(s)(\varphi(\eta)-\varphi(s))^{\alpha-1}h(s)ds \\
&&-\frac{1}{\Gamma(\alpha)}\int_{0}^{t}\varphi^{\prime}(s)(\varphi(t)-\varphi(s))^{\alpha-1}h(s)ds\\
& =&\frac{(\alpha-1)[\Phi(t)]^{\alpha-1}}{\mu\Gamma(\alpha)}
\int_{0}^{\eta}\varphi^{\prime}(s)\varphi^{\prime}(1)(\varphi(1)-\varphi(s))^{\alpha-2}h(s)ds\\
&&+\frac{(\alpha-1)[\Phi(t)]^{\alpha-1}}{\mu\Gamma(\alpha)}
\int_{\eta}^{t}\varphi^{\prime}(s)\varphi^{\prime}(1)(\varphi(1)-\varphi(s))^{\alpha-2}h(s)ds\\
&&+\frac{(\alpha-1)[\Phi(t)]^{\alpha-1}}{\mu\Gamma(\alpha)}
\int_{t}^{1}\varphi^{\prime}(s)\varphi^{\prime}(1)(\varphi(1)-\varphi(s))^{\alpha-2}h(s)ds
\end{eqnarray*}
\begin{eqnarray*}
&&-\frac{\beta [\Phi(t)]^{\alpha-1}}{\mu\Gamma(\alpha)}\int_{0}^{\eta}\varphi^{\prime}(s)(\varphi(\eta)-\varphi(s))^{\alpha-1}h(s)ds \\
&&-\frac{1}{\Gamma(\alpha)}\int_{0}^{\eta}\varphi^{\prime}(s)(\varphi(t)-\varphi(s))^{\alpha-1}h(s)ds\\
&&-\frac{1}{\Gamma(\alpha)}\int_{\eta}^{t}\varphi^{\prime}(s)(\varphi(t)-\varphi(s))^{\alpha-1}h(s)ds\\
& =&\frac{1}{\mu\Gamma(\alpha)}\int_{0}^{\eta} \Big([\Phi(t)]^{\alpha-1}\big[(\alpha-1)\varphi^{\prime}(1)(\varphi(1)-\varphi(s))^{\alpha-2}\\
&&-\beta(\varphi(\eta)-\varphi(s))^{\alpha-1}\big]
-\mu(\varphi(t)-\varphi(s))^{\alpha-1}\Big) \varphi^{\prime}(s)h(s)ds\\
&&+\int_{\eta}^{t}\frac{[\Phi(t)]^{\alpha-1}(\alpha-1)\varphi^{\prime}(1)(\varphi(1)-\varphi(s))^{\alpha-2}
-\mu(\varphi(t)-\varphi(s))^{\alpha-1}}{\mu\Gamma(\alpha)}\varphi^{\prime}(s)h(s)ds\\
&&+\int_{t}^{1}\frac{[\Phi(t)]^{\alpha-1}(\alpha-1)\varphi^{\prime}(1)(\varphi(1)-\varphi(s))^{\alpha-2}}
{\mu\Gamma(\alpha)}\varphi^{\prime}(s)h(s)ds\\
&=&\int_{0}^{1}\mathcal{G}(t,s)\varphi^{\prime}(s)h(s)ds.
\end{eqnarray*}
\end{proof}
Now we derive some properties of Green's function $\mathcal{G}(t,s)$.
\begin{lemma}\label{lem1.4}
Let $2<\alpha\leq3$ and $0\leq\beta<(\alpha-1)\varphi^{\prime}(1)\frac{[\Phi(1)]^{\alpha-2}}{[\Phi(\eta)]^{\alpha-1}}$. Then the Green's function $\mathcal{G}(t,s)$ defined by \eqref{eq3} satisfies the following properties:
\begin{itemize}
\item[(i)]  $\mathcal{G}(t,s)$ is continuous on $[0,1]\times[0,1],$
\item[(ii)] $\mathcal{G}(t,s)>0$, for all $t,s \in(0,1),$
\item[(iii)] For $s\in(0,1)$, we have
$$\max_{t\in[0,1]}\mathcal{G}(t,s)\leq
\frac{(\alpha-1)\varphi^{\prime}(1)(\varphi(1)-\varphi(s))^{\alpha-2}}
{\mu\Gamma(\alpha)}.$$
\end{itemize}
\end{lemma}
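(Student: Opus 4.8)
The plan is to establish the three assertions in order, after recording two facts that drive everything. First, the stated bound on $\beta$ is exactly the condition $\mu>0$, so division by $\mu\Gamma(\alpha)$ is legitimate and the sign of each branch of $\mathcal{G}$ is governed entirely by the sign of its numerator. Second, since $\varphi$ is strictly increasing with $\varphi^{\prime}>0$ and maps $[0,1]$ into $[0,1]$, one has $\Phi(t)>0$ on $(0,1]$, $\Phi$ increasing, and $\Phi(t)\le\Phi(1)=\varphi(1)-\varphi(0)\le1$. For (i), each branch is a composition of continuous functions, so I would only check agreement across the interfaces. On $s=t$ the factor $(\varphi(t)-\varphi(s))^{\alpha-1}$ vanishes (as $\alpha-1>0$), collapsing branch $1$ onto branch $2$ and branch $3$ onto branch $4$; on $s=\eta$ the factor $\beta(\varphi(\eta)-\varphi(s))^{\alpha-1}$ vanishes, collapsing branch $1$ onto branch $3$ and branch $2$ onto branch $4$. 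Hence the four pieces match on every common edge and $\mathcal{G}$ is continuous on $[0,1]^{2}$.

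The engine for (ii) is the elementary inequality
\[
a^{\alpha-1}(b-u)^{\alpha-2}\ge b^{\alpha-2}(a-u)^{\alpha-1}\qquad(0\le u\le a\le b),
\]
which I would prove by rewriting it as $(1-u/a)^{\alpha-1}\le(1-u/b)^{\alpha-2}$ and using $u/a\ge u/b$ together with $0<\alpha-2\le\alpha-1$. Granting $\mu>0$, branch $4$ is a product of positive factors. In branch $2$ I would bound the subtracted term above via the hypothesis, and in branch $3$ bound $\mu$ above by $(\alpha-1)\varphi^{\prime}(1)[\Phi(1)]^{\alpha-2}$; in both cases, writing $u=\varphi(s)-\varphi(0)$, positivity reduces precisely to the displayed inequality with $(a,b)=(\Phi(\eta),\Phi(1))$, respectively $(a,b)=(\Phi(t),\Phi(1))$, where the constraints $u\le\Phi(\eta)$ and $u\le\Phi(t)$ hold by the defining inequalities of the two regions.

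The hard part will be branch $1$ (where $s\le\min\{\eta,t\}$). Setting $u=\varphi(s)-\varphi(0)$, $v=\Phi(t)$, $e=\Phi(\eta)$, $w=\Phi(1)$ and inserting the definition of $\mu$, its numerator splits as $N=(\alpha-1)\varphi^{\prime}(1)D-\beta C$, with $D=v^{\alpha-1}(w-u)^{\alpha-2}-w^{\alpha-2}(v-u)^{\alpha-1}$ and $C=v^{\alpha-1}(e-u)^{\alpha-1}-e^{\alpha-1}(v-u)^{\alpha-1}$. The displayed inequality gives $D\ge0$; when $t\ge\eta$ one has $v\ge e$, whence $C\le0$ and $N>0$ is immediate. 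The genuine obstacle is the case $t<\eta$, where $C>0$ and the two terms compete. Here I would invoke the hypothesis in the sharp form $\beta e^{\alpha-1}<(\alpha-1)\varphi^{\prime}(1)w^{\alpha-2}$ to reduce $N>0$ to $e^{\alpha-1}D\ge w^{\alpha-2}C$; upon expanding, the terms carrying $(v-u)^{\alpha-1}$ cancel exactly, and after dividing by $v^{\alpha-1}$ what remains is once more the displayed inequality, this time with $(a,b)=(e,w)$. This cancellation is the crux of the whole argument, and it is the step I expect to require the most care.

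Finally, (iii) should fall out at once. In each of the four branches the subtracted quantities $\mu(\varphi(t)-\varphi(s))^{\alpha-1}$ and $\beta(\varphi(\eta)-\varphi(s))^{\alpha-1}$ are nonnegative, so discarding them yields $\mathcal{G}(t,s)\le\frac{1}{\mu\Gamma(\alpha)}[\Phi(t)]^{\alpha-1}(\alpha-1)\varphi^{\prime}(1)(\varphi(1)-\varphi(s))^{\alpha-2}$; combining this with $[\Phi(t)]^{\alpha-1}\le[\Phi(1)]^{\alpha-1}\le1$ produces the asserted bound, uniformly in $t$.
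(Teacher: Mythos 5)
Your proof is correct, and at its core it is the same normalization-plus-power-monotonicity argument as the paper's, but organized around a different pivot in the one genuinely delicate case. The paper proves (ii) by substituting $\Lambda(t)=\Phi(s)/\Phi(t)$, factoring $[\Phi(t)]^{\alpha-1}$ out of each branch, and comparing the quantities $(1-\Lambda(1))^{\alpha-2}$, $(1-\Lambda(t))^{\alpha-2}$, $(1-\Lambda(t))^{\alpha-1}$, $(1-\Lambda(\eta))^{\alpha-1}$ via monotonicity of $\Lambda$ and of $x\mapsto x^{p}$ on $[0,1]$; your engine inequality $a^{\alpha-1}(b-u)^{\alpha-2}\ge b^{\alpha-2}(a-u)^{\alpha-1}$ is exactly this comparison in unnormalized form, so your treatment of branches $2$, $3$, $4$ matches the paper's step for step. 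The real divergence is the region $s\le\min\{\eta,t\}$ with $t<\eta$ (the paper's Case 2): you replace $\beta$ by its strict upper bound $(\alpha-1)\varphi'(1)[\Phi(1)]^{\alpha-2}/[\Phi(\eta)]^{\alpha-1}$ and rely on the exact cancellation of the $(v-u)^{\alpha-1}$ terms in $e^{\alpha-1}D-w^{\alpha-2}C$, reducing to the engine inequality with $(a,b)=(e,w)$; the paper instead uses only $\mu>0$, bounding $-\mu(1-\Lambda(t))^{\alpha-1}$ below by $-\mu(1-\Lambda(\eta))^{\alpha-1}$ (i.e. replacing $t$ by $\eta$ in the subtracted term), after which the $\beta$-terms cancel identically and positivity reduces to $(1-\Lambda(1))^{\alpha-2}>(1-\Lambda(\eta))^{\alpha-1}$. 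Both cancellations are valid; the paper's is marginally more economical in that the sharp bound on $\beta$ is never invoked beyond guaranteeing $\mu>0$, while yours isolates a single reusable lemma and makes the role of the $\beta$-hypothesis explicit. Your treatments of (i) (pasting across the interfaces $s=t$ and $s=\eta$, where the discarded terms vanish) and of (iii) (drop the nonnegative subtracted terms, then use $[\Phi(t)]^{\alpha-1}\le[\Phi(1)]^{\alpha-1}\le 1$) coincide with the paper's, which in fact asserts (i) without proof. One detail worth writing out when you flesh this in: strictness in (ii) on the open square comes from $u=\Phi(s)>0$ for $s\in(0,1)$, which makes the engine inequality strict, and, when $\beta>0$ and $C>0$, from the strictness of the hypothesis on $\beta$.
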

\begin{proof}
\rm{(i)} It is easy to check that $\mathcal{G}(t,s)$ is continuous on $[0,1]\times[0,1]$.\\
\rm{(ii)} For $s \leq \min\{\eta,t\}$, by \eqref{eq3}, and observing that $\Lambda$ is decreasing function, one has
\begin{eqnarray*}
\mu\Gamma(\alpha)\mathcal{G}(t,s)&=&[\Phi(t)]^{\alpha-1}\big[(\alpha-1)\varphi^{\prime}(1)(\varphi(1)-\varphi(s))^{\alpha-2}
-\beta(\varphi(\eta)-\varphi(s))^{\alpha-1}\big]\\
&&-\mu(\varphi(t)-\varphi(s))^{\alpha-1}\\
&=& [\Phi(t)]^{\alpha-1}\Bigg[(\alpha-1)\varphi^{\prime}(1)(\varphi(1)-\varphi(s))^{\alpha-2}
-\beta(\varphi(\eta)-\varphi(s))^{\alpha-1}\\
&&-\mu\frac{(\varphi(t)-\varphi(s))^{\alpha-1}}{(\varphi(t)-\varphi(0))^{\alpha-1}}\Bigg]\\
&=&[\Phi(t)]^{\alpha-1}\Big[(\alpha-1)\varphi^{\prime}(1)[\Phi(1)]^{\alpha-2}(1-\Lambda(1))^{\alpha-2}
-\beta[\Phi(\eta)]^{\alpha-1}(1-\Lambda(\eta))^{\alpha-1}\\
&&-\mu(1-\Lambda(t))^{\alpha-1}\Big]\\
&=&[\Phi(t)]^{\alpha-1}\Big[(\alpha-1)\varphi^{\prime}(1)[\Phi(1)]^{\alpha-2}(1-\Lambda(1))^{\alpha-2}
-\beta[\Phi(\eta)]^{\alpha-1}(1-\Lambda(\eta))^{\alpha-1}\\
&&-(\alpha-1)\varphi^{\prime}(1)[\Phi(1)]^{\alpha-2}(1-\Lambda(t))^{\alpha-1}+\beta[\Phi(\eta)]^{\alpha-1}(1-\Lambda(t))^{\alpha-1}\Big].\\
\end{eqnarray*}
Next, we consider two cases:\\
\textbf{Case 1:} If $\eta\leq t$, then
\begin{eqnarray*}
\mu\Gamma(\alpha)\mathcal{G}(t,s)&=&[\Phi(t)]^{\alpha-1}\Big[(\alpha-1)\varphi^{\prime}(1)[\Phi(1)]^{\alpha-2}
\big[(1-\Lambda(1))^{\alpha-2}-(1-\Lambda(t))^{\alpha-1}\big]\\
&&+\beta[\Phi(\eta)]^{\alpha-1}
\big[(1-\Lambda(t))^{\alpha-1}-(1-\Lambda(\eta))^{\alpha-1}\Big]\\
&>&[\Phi(t)]^{\alpha-1}\Big[(\alpha-1)\varphi^{\prime}(1)[\Phi(1)]^{\alpha-2}
\big[(1-\Lambda(t))^{\alpha-2}-(1-\Lambda(t))^{\alpha-1}\big]\\
&&+\beta[\Phi(\eta)]^{\alpha-1}
\big[(1-\Lambda(t))^{\alpha-1}-(1-\Lambda(\eta))^{\alpha-1}\Big]\\
&=&[\Phi(t)]^{\alpha-1}\Big[(\alpha-1)\varphi^{\prime}(1)[\Phi(1)]^{\alpha-2}\Lambda(t)(1-\Lambda(t))^{\alpha-2}\\
&&+\beta[\Phi(\eta)]^{\alpha-1}
\big[(1-\Lambda(t))^{\alpha-1}-(1-\Lambda(\eta))^{\alpha-1}\Big]\\
&>&0.
\end{eqnarray*}
\textbf{Case 2:} If $t \leq \eta$, then
\begin{eqnarray*}
\mu\Gamma(\alpha)\mathcal{G}(t,s)&=&[\Phi(t)]^{\alpha-1}\Big[(\alpha-1)\varphi^{\prime}(1)[\Phi(1)]^{\alpha-2}(1-\Lambda(1))^{\alpha-2}
-\beta[\Phi(\eta)]^{\alpha-1}(1-\Lambda(\eta))^{\alpha-1}\\
&&-\mu(1-\Lambda(t))^{\alpha-1}\Big]\\
&>&[\Phi(t)]^{\alpha-1}\Big[(\alpha-1)\varphi^{\prime}(1)[\Phi(1)]^{\alpha-2}(1-\Lambda(1))^{\alpha-2}
-\beta[\Phi(\eta)]^{\alpha-1}(1-\Lambda(\eta))^{\alpha-1}\\
&&-\mu(1-\Lambda(\eta))^{\alpha-1}\Big]\\
&=&(\alpha-1)[\Phi(t)]^{\alpha-1}\varphi^{\prime}(1)[\Phi(1)]^{\alpha-2}
\big[(1-\Lambda(1))^{\alpha-2}-(1-\Lambda(\eta))^{\alpha-1}\Big]\\
&>&(\alpha-1)[\Phi(t)]^{\alpha-1}\varphi^{\prime}(1)[\Phi(1)]^{\alpha-2}
\big[(1-\Lambda(\eta))^{\alpha-2}-(1-\Lambda(\eta))^{\alpha-1}\Big]\\
&=&(\alpha-1)[\Phi(t)]^{\alpha-1}\varphi^{\prime}(1)[\Phi(1)]^{\alpha-2}\Lambda(\eta)(1-\Lambda(\eta))^{\alpha-2}\\
&>&0.
\end{eqnarray*}
For $t\leq s\leq \eta$, one has
\begin{eqnarray*}
\mu\Gamma(\alpha)\mathcal{G}(t,s)&=&[\Phi(t)]^{\alpha-1}\big[(\alpha-1)\varphi^{\prime}(1)(\varphi(1)-\varphi(s))^{\alpha-2}
-\beta(\varphi(\eta)-\varphi(s))^{\alpha-1}\big]\\
&=&[\Phi(t)]^{\alpha-1}\Big[(\alpha-1)\varphi^{\prime}(1)[\Phi(1)]^{\alpha-2}(1-\Lambda(1))^{\alpha-2}
-\beta[\Phi(\eta)]^{\alpha-1}(1-\Lambda(\eta))^{\alpha-1}\Big]\\
&>&[\Phi(t)]^{\alpha-1}\Big[\beta[\Phi(\eta)]^{\alpha-1}(1-\Lambda(1))^{\alpha-2}
-\beta[\Phi(\eta)]^{\alpha-1}(1-\Lambda(\eta))^{\alpha-1}\Big]\\
&>&\beta[\Phi(t)\Phi(\eta)]^{\alpha-1}\Big[(1-\Lambda(\eta))^{\alpha-2}
-(1-\Lambda(\eta))^{\alpha-1}\Big]\\
&=&\beta[\Phi(t)\Phi(\eta)]^{\alpha-1}(1-\Lambda(\eta))^{\alpha-2}\Lambda(\eta)\\
&>&0.
\end{eqnarray*}
For $\eta\leq s\leq t$, one has
\begin{eqnarray*}
\mu\Gamma(\alpha)\mathcal{G}(t,s)&=&[\Phi(t)]^{\alpha-1}(\alpha-1)\varphi^{\prime}(1)(\varphi(1)-\varphi(s))^{\alpha-2}
-\mu(\varphi(t)-\varphi(s))^{\alpha-1}\\
&=&(\alpha-1)[\Phi(t)]^{\alpha-1}\varphi^{\prime}(1)[\Phi(1)]^{\alpha-2}(1-\Lambda(1))^{\alpha-2}
-\mu[\Phi(t)]^{\alpha-1}(1-\Lambda(t))^{\alpha-1}\\
&=&(\alpha-1)[\Phi(t)]^{\alpha-1}\varphi^{\prime}(1)[\Phi(1)]^{\alpha-2}(1-\Lambda(1))^{\alpha-2}\\
&&-(\alpha-1)[\Phi(t)]^{\alpha-1}\varphi^{\prime}(1)[\Phi(1)]^{\alpha-2}(1-\Lambda(t))^{\alpha-1}
+\beta[\Phi(\eta)\Phi(t)]^{\alpha-1}(1-\Lambda(t))^{\alpha-1}\\
&>&(\alpha-1)[\Phi(t)]^{\alpha-1}\varphi^{\prime}(1)[\Phi(1)]^{\alpha-2}\Big[(1-\Lambda(1))^{\alpha-2}
-(1-\Lambda(t))^{\alpha-1}\Big]\\
&>&(\alpha-1)[\Phi(t)]^{\alpha-1}\varphi^{\prime}(1)[\Phi(1)]^{\alpha-2}\Big[(1-\Lambda(t))^{\alpha-2}
-(1-\Lambda(t))^{\alpha-1}\Big]\\
&=&(\alpha-1)[\Phi(t)]^{\alpha-1}\varphi^{\prime}(1)[\Phi(1)]^{\alpha-2}(1-\Lambda(t))^{\alpha-2} \Lambda(t)\\
&>&0.
\end{eqnarray*}
Clearly, for $\max\{\eta,t\}\leq s$, $\mathcal{G}(t,s)>0$.\\
Therefore, $\mathcal{G}(t,s)>0$ for $t,s\in(0,1)$.\\
\rm{(iii)} Since $\varphi$ is a strictly increasing function, then by \eqref{eq3}, it is easily seen that
 \begin{eqnarray*}
\mu\Gamma(\alpha)\max_{t\in[0,1]}\mathcal{G}(t,s)&\leq& (\alpha-1)[\Phi(t)]^{\alpha-1}
\varphi^{\prime}(1)(\varphi(1)-\varphi(s))^{\alpha-2}\\
&\leq&(\alpha-1)\varphi^{\prime}(1)(\varphi(1)-\varphi(s))^{\alpha-2},\ \text{for all}\ s\in(0,1).\\
 \end{eqnarray*}
Therefore
$$\max_{t\in[0,1]}\mathcal{G}(t,s)\leq\frac{(\alpha-1)\varphi^{\prime}(1)
(\varphi(1)-\varphi(s))^{\alpha-2}}{\mu\Gamma(\alpha)},\ \text{for all}\ s\in(0,1).$$
The proof is completed.
\end{proof}
\section{Main results}
By $X = \mathcal{C}([0,1],\mathbb{R})$ we denote the set of continuous functions. Let $d : X\times X\rightarrow[0,\infty)$ be given by
\[d(x,y)=\|(x-y)^{2}\|_{\infty}=\sup_{t\in[0,1]}(x(t)-y(t))^{2}.\]
Then, $(X, d, r)$ is a complete $b$-metric space with $r = 2$.\\
Let $\mathcal{A}: X \rightarrow X$ be the operator defined as
\begin{equation}\label{eq3.1}
\mathcal{A}u(t)= \int_{0}^{1} \mathcal{ G}(t,s)\varphi^{\prime}(s)f(s,u(s))ds,\ t\in[0,1].
\end{equation}
Clearly, $u$ is a solution of \eqref{eq09}-\eqref{eq010} if and only if $u$ is a fixed point of operator $\mathcal{A}$.\\
We now list suitable conditions on the nonlinearity function $f(t,u)$.
 \begin{itemize}
\item[(H1)] $f\in \mathcal{C}([0,1]\times \mathbb{R},\mathbb{R})$;
\item[(H2)] There exists real-valued function not identically zero $g\in \mathcal{C}([0,1], \mathbb{R}_{+})$ such that for all
$t\in[0,1]$ and $u, v\in \mathbb{R}$, we have
    \[|f(t,u)-f(t,v)|\leq g(t)|u-v|.\]
\end{itemize}
Our first result on the existence and uniqueness of solutions is based on the Banach contraction principle  in a $b$-metric space.
\begin{theorem} \label{thmm1}
Assume that \rm{(H1)}-\rm{(H2)} hold. Then the fractional boundary value problem \eqref{eq09}-\eqref{eq010} has a unique solution on $[0,1]$ provided that
\begin{equation}\label{eq3.2}
\|g\|_{\infty}<\frac{\mu\Gamma(\alpha)}{\sqrt{2}\big[\Phi(1)\big]^{\alpha-1}\varphi^{\prime}(1)}.
\end{equation}
\end{theorem}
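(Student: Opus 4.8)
The plan is to apply the Banach contraction principle in $b$-metric space, namely Theorem \ref{thmp1}, to the operator $\mathcal{A}$ defined in \eqref{eq3.1}. Since a fixed point of $\mathcal{A}$ is precisely a solution of \eqref{eq09}-\eqref{eq010}, it suffices to show that $\mathcal{A}$ is a contraction with constant $\lambda<1/r=1/2$ on the complete $b$-metric space $(X,d,2)$. First I would record that $\mathcal{A}$ is well-defined, i.e. $\mathcal{A}u\in X$ whenever $u\in X$: this follows from the continuity of $\mathcal{G}$ on $[0,1]\times[0,1]$ (Lemma \ref{lem1.4}(i)), the continuity of $\varphi'$, and hypothesis (H1), which together guarantee that $s\mapsto \mathcal{G}(t,s)\varphi'(s)f(s,u(s))$ is integrable and that $t\mapsto \mathcal{A}u(t)$ is continuous.

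The core of the argument is the pointwise estimate of $|\mathcal{A}u(t)-\mathcal{A}v(t)|$. For $u,v\in X$ and $t\in[0,1]$, I would write
$$|\mathcal{A}u(t)-\mathcal{A}v(t)|\leq \int_{0}^{1}\mathcal{G}(t,s)\varphi^{\prime}(s)|f(s,u(s))-f(s,v(s))|\,ds,$$
using the positivity $\mathcal{G}(t,s)>0$ from Lemma \ref{lem1.4}(ii) (together with $\varphi'>0$) to remove the absolute value. Then (H2) gives $|f(s,u(s))-f(s,v(s))|\leq g(s)|u(s)-v(s)|\leq \|g\|_{\infty}|u(s)-v(s)|$, while Lemma \ref{lem1.4}(iii) controls $\mathcal{G}(t,s)$ uniformly in $t$. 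The crucial observation is that the $b$-metric is built from the \emph{square} of the sup-norm, so that $|u(s)-v(s)|=\sqrt{(u(s)-v(s))^{2}}\leq \sqrt{d(u,v)}$. Combining these yields
$$|\mathcal{A}u(t)-\mathcal{A}v(t)|\leq \frac{(\alpha-1)\varphi^{\prime}(1)\|g\|_{\infty}}{\mu\Gamma(\alpha)}\sqrt{d(u,v)}\int_{0}^{1}\varphi^{\prime}(s)(\varphi(1)-\varphi(s))^{\alpha-2}\,ds.$$
The remaining integral is evaluated by the substitution $w=\varphi(s)$, giving $\int_{0}^{1}\varphi^{\prime}(s)(\varphi(1)-\varphi(s))^{\alpha-2}\,ds=[\Phi(1)]^{\alpha-1}/(\alpha-1)$, so the factor $(\alpha-1)$ cancels and the right-hand side simplifies to $\tfrac{\varphi^{\prime}(1)[\Phi(1)]^{\alpha-1}\|g\|_{\infty}}{\mu\Gamma(\alpha)}\sqrt{d(u,v)}$.

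To finish, I would square and take the supremum over $t\in[0,1]$, obtaining
$$d(\mathcal{A}u,\mathcal{A}v)=\sup_{t\in[0,1]}(\mathcal{A}u(t)-\mathcal{A}v(t))^{2}\leq \left(\frac{\varphi^{\prime}(1)[\Phi(1)]^{\alpha-1}\|g\|_{\infty}}{\mu\Gamma(\alpha)}\right)^{2}d(u,v),$$
so $\mathcal{A}$ is a contraction with constant $\lambda=\big(\varphi^{\prime}(1)[\Phi(1)]^{\alpha-1}\|g\|_{\infty}/(\mu\Gamma(\alpha))\big)^{2}$. A direct computation shows that $\lambda<1/r=1/2$ is equivalent to $\|g\|_{\infty}<\tfrac{\mu\Gamma(\alpha)}{\sqrt{2}[\Phi(1)]^{\alpha-1}\varphi^{\prime}(1)}$, which is precisely hypothesis \eqref{eq3.2}; the $\sqrt{2}$ arises from combining the factor $r=2$ of the $b$-metric with the squaring step. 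Theorem \ref{thmp1} then furnishes a unique fixed point of $\mathcal{A}$, hence a unique solution of \eqref{eq09}-\eqref{eq010}.

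The step I expect to require the most care is the bookkeeping around the square: one must track precisely where the square root $\sqrt{d(u,v)}$ enters from the Lipschitz estimate and verify that squaring it at the end reproduces exactly the factor matching \eqref{eq3.2}, rather than treating $d$ as if it were an ordinary metric. (Note also that \eqref{eq3.2} tacitly presupposes $\mu>0$, which is ensured by the admissible range of $\beta$ used in Lemma \ref{lem1.4}.)
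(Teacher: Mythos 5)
Your proposal is correct and follows essentially the same route as the paper's own proof: the pointwise estimate via Lemma \ref{lem1.4}(iii) and (H2), the substitution evaluating $\int_{0}^{1}\varphi'(s)(\varphi(1)-\varphi(s))^{\alpha-2}ds=[\Phi(1)]^{\alpha-1}/(\alpha-1)$, the resulting contraction constant $\lambda=\bigl(\|g\|_{\infty}\varphi'(1)[\Phi(1)]^{\alpha-1}/(\mu\Gamma(\alpha))\bigr)^{2}$, and the application of Theorem \ref{thmp1} with $\lambda<1/r=1/2$ equivalent to \eqref{eq3.2}. The only cosmetic difference is that you estimate $|\mathcal{A}u(t)-\mathcal{A}v(t)|$ first and square at the end, while the paper squares from the outset; your added remarks on well-definedness of $\mathcal{A}$ and on $\mu>0$ are sound and, if anything, make the argument slightly more complete.
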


\begin{proof}
By Lemmas \ref{lem1.3}-\ref{lem1.4}, for $u,v\in X$, and $t\in [0,1]$, we have
 \begin{eqnarray*}
 \big|(\mathcal{A}u)(t)-(\mathcal{A}v)(t)\big|^{2}&=&\Bigg|\int_{0}^{1} \mathcal{ G}(t,s)\varphi^{\prime}(s)\Big(f(s,u(s))-f(s,v(s))\Big)ds\Bigg|^{2} \\
  &\leq&\Bigg[\int_{0}^{1} \mathcal{ G}(t,s)\varphi^{\prime}(s)\Big|f(s,u(s))-f(s,v(s))\Big|ds\Bigg]^{2} \\
 &\leq&\Bigg[\int_{0}^{1} \mathcal{ G}(t,s)\varphi^{\prime}(s)g(s)\big|u(s)-v(s)\big|ds\Bigg]^{2} \\
&\leq&\Bigg[\int_{0}^{1} \mathcal{ G}(t,s)\varphi^{\prime}(s)\|g\|_{\infty}\sqrt{d(u,v)}ds\Bigg]^{2} \\
&\leq&\Bigg(\frac{(\alpha-1)\|g\|_{\infty}\varphi^{\prime}(1)}{\mu\Gamma(\alpha)}\Bigg)^{2}
\Bigg[\int_{0}^{1} \big(\varphi(1)-\varphi(s)\big)^{\alpha-2}\varphi^{\prime}(s)ds\Bigg]^{2}d(u,v)\\
&=&\Bigg(\frac{(\alpha-1)\|g\|_{\infty}\varphi^{\prime}(1)}{\mu\Gamma(\alpha)}\Bigg)^{2}
\Bigg[ \frac{1}{\alpha-1}(\varphi(1)-\varphi(0))^{\alpha-1} \Bigg]^{2}d(u,v)\\
&=&\Bigg(\frac{\|g\|_{\infty}\varphi^{\prime}(1)\big[\Phi(1)\big]^{\alpha-1}}{\mu\Gamma(\alpha)}\Bigg)^{2}d(u,v)
   \end{eqnarray*}
By denoting $\lambda=\Big(\frac{\|g\|_{\infty}\varphi^{\prime}(1)[\Phi(1)]^{\alpha-1}}{\mu\Gamma(\alpha)}\Big)^{2},$ we get
\[d(\mathcal{A}u,\mathcal{A}v)\leq
\lambda d(u,v).\]
From \eqref{eq3.2} and by means of Theorem \ref{thmp1}, we claim that the operator $\mathcal{A}$ has a unique fixed point.
The proof is completed.
\end{proof}
The second result concerns the existence of positive solutions for the $\varphi$-Riemann-Liouville fractional differential equation
\eqref{eq09}-\eqref{eq010} via the $\gamma$–$\psi$-Geraghty
contractive type mappings. To begin with, we make the following assumption:

 \begin{itemize}
\item[(H3)] $f \in C([0,1]\times \mathbb{R^{+}}, \mathbb{R}^{+})$.
\end{itemize}
\begin{theorem}\label{thmm2}
Assume that \rm{(H3)} holds. If the following assumptions are satisfied:
\begin{itemize}
  \item [(i)] There exists $f:[0,1]\times \mathbb{R^{+}}\rightarrow \mathbb{R^{+}}$ such that
\begin{eqnarray*}
\big|f(s,u(s))-f(s,v(s))\big|&\leq&\frac{1}{2\sqrt{2}}\frac{\mu\Gamma(\alpha)}{\varphi^{\prime}(1)\big[\Phi(1)\big]^{\alpha-1}}\\
&&\times\sqrt{\psi\big(\|(u-v)^{2}\|_{\infty}\big)\theta\big(\psi\big(\|(u-v)^{2}\|_{\infty}\big)\big)},\ s\in[0,1],
\end{eqnarray*}
where $\psi\in\Psi$ and $\theta\in\Theta$;
  \item [(ii)] There exists $u_{0}\in \mathcal{C}([0,1])$ and $\tau:\mathbb{R}^{2}\rightarrow \mathbb{R}$ such that
\[\tau\big(u_{0}(t),\mathcal{A}u_{0}(t)\big)\geq0,\ t\in[0,1];\]
  \item [(iii)] For $t\in[0,1]$ and $u,v\in \mathcal{C}([0,1])$, $\tau\big(u(t),v(t)\big)\geq0$ implies
\[\tau\big(\mathcal{A}u(t),\mathcal{A}v(t)\big)\geq0;\]
  \item [(iv)] If $\{u_{n}\}\subseteq \mathcal{C}([0,1])$ with $u_{n}\rightarrow u$ in $\mathcal{C}([0,1])$, and $\tau(u_{n},u_{n+1})\geq0$, then $\tau(u_{n},u)\geq0$,
\end{itemize}
then the fractional boundary value problem \eqref{eq09}-\eqref{eq010} has at least one positive solution.
\end{theorem}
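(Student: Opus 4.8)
The plan is to apply the fixed point theorem for generalized $\gamma$-$\psi$-Geraghty mappings (Theorem \ref{thmp2}) to the operator $\mathcal{A}$ defined in \eqref{eq3.1}. The first step is to convert the function $\tau$ supplied in the hypotheses into an admissibility function $\gamma:X\times X\to[0,\infty)$ by setting
$$\gamma(u,v)=\begin{cases}1,& \tau(u(t),v(t))\geq0 \text{ for all } t\in[0,1],\\ 0,& \text{otherwise}.\end{cases}$$
With this choice, hypotheses (iii), (ii), (iv) of the theorem translate verbatim into conditions (i)--(iii) of Theorem \ref{thmp2}: $\gamma$-admissibility follows from (iii), the existence of $u_{0}$ with $\gamma(u_{0},\mathcal{A}u_{0})\geq1$ from (ii), and the sequential condition from (iv). It therefore remains to verify that $\mathcal{A}$ is a generalized $\gamma$-$\psi$-Geraghty mapping with $r=2$.

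The heart of the argument is the contractive estimate. Starting exactly as in the proof of Theorem \ref{thmm1}, for $u,v\in X$ and $t\in[0,1]$ I would write
$$|\mathcal{A}u(t)-\mathcal{A}v(t)|^{2}\leq\Bigl[\int_{0}^{1}\mathcal{G}(t,s)\varphi^{\prime}(s)\bigl|f(s,u(s))-f(s,v(s))\bigr|\,ds\Bigr]^{2},$$
and then substitute the bound from hypothesis (i). The key observation is that the right-hand side of (i) does not depend on $s$ (since $\|(u-v)^{2}\|_{\infty}=d(u,v)$), so the constant $\frac{1}{2\sqrt{2}}\frac{\mu\Gamma(\alpha)}{\varphi^{\prime}(1)[\Phi(1)]^{\alpha-1}}\sqrt{\psi(d(u,v))\theta(\psi(d(u,v)))}$ factors out of the integral. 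Bounding $\int_{0}^{1}\mathcal{G}(t,s)\varphi^{\prime}(s)\,ds\leq\frac{\varphi^{\prime}(1)[\Phi(1)]^{\alpha-1}}{\mu\Gamma(\alpha)}$ by Lemma \ref{lem1.4}(iii) together with the computation $\int_{0}^{1}(\varphi(1)-\varphi(s))^{\alpha-2}\varphi^{\prime}(s)\,ds=\frac{1}{\alpha-1}[\Phi(1)]^{\alpha-1}$ already carried out in Theorem \ref{thmm1}, the two constants cancel and the squared prefactor $(2\sqrt{2})^{2}=8=r^{3}$ appears. Taking the supremum over $t$ yields
$$r^{3}d(\mathcal{A}u,\mathcal{A}v)\leq\psi(d(u,v))\,\theta(\psi(d(u,v))).$$

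Finally I would invoke the sub-linearity of $\psi\in\Psi$: from $\psi(\tau x)\leq\tau x$ for $\tau>1$ one deduces $\psi(z)\leq z$ for all $z\geq0$, hence $\psi(r^{3}d(\mathcal{A}u,\mathcal{A}v))\leq r^{3}d(\mathcal{A}u,\mathcal{A}v)\leq\theta(\psi(d(u,v)))\psi(d(u,v))$. Since $\gamma(u,v)\leq1$, this gives the defining inequality of Definition \ref{defp1.3}, so $\mathcal{A}$ is a generalized $\gamma$-$\psi$-Geraghty mapping. Theorem \ref{thmp2} then provides a fixed point $u^{*}=\mathcal{A}u^{*}$, which by the equivalence noted after \eqref{eq3.1} solves \eqref{eq09}-\eqref{eq010}; positivity follows because $\mathcal{G}(t,s)>0$ on $(0,1)\times(0,1)$ by Lemma \ref{lem1.4}(ii), $\varphi^{\prime}>0$, and $f\geq0$ by (H3). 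The step I expect to require the most care is matching the contraction constant to precisely $1/r^{3}$: the factor $\frac{1}{2\sqrt{2}}$ in (i) is engineered so that its square cancels the Green's-function bound and leaves exactly $r^{3}=8$, and the passage from $r^{3}d(\mathcal{A}u,\mathcal{A}v)$ to $\psi(r^{3}d(\mathcal{A}u,\mathcal{A}v))$ relies on the sub-linearity of $\psi$ rather than on monotonicity alone.
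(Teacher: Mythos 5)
Your proposal is correct and follows essentially the same route as the paper's own proof: the same indicator-type $\gamma$ built from $\tau$, the same Green's-function bound from Lemma \ref{lem1.4}(iii) producing the factor $\tfrac{1}{8}=\tfrac{1}{r^{3}}$, the same use of $\psi(z)\leq z$ to pass to $\psi(8d(\mathcal{A}u,\mathcal{A}v))$, and the same application of Theorem \ref{thmp2}. Your closing remark on positivity (via $\mathcal{G}>0$, $\varphi^{\prime}>0$, and (H3)) is in fact a detail the paper leaves implicit, so it is a welcome addition rather than a deviation.
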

\begin{proof}
By Lemma \ref{lem1.3}, $u\in \mathcal{C}([0,1])$ is a solution of \eqref{eq09}-\eqref{eq010} if and only if $u$ is a solution of the integral equation
\[u(t)=\int_{0}^{1} \mathcal{ G}(t,s)\varphi^{\prime}(s)f(s,u(s))ds,\ t\in [0,1].\]
Clearly, the fixed points of $\mathcal{A}$ coincide with the solution of fractional boundary value problem \eqref{eq09}-\eqref{eq010}.\\
Let $u,v\in \mathcal{C}([0,1])$ be such that $\tau\big(u(t),v(t)\big)\geq0$ for $t\in[0,1]$.\\
Using \rm{(i)} and Lemma \ref{lem1.4}, we get
\begin{eqnarray*}
\big|(\mathcal{A}u)(t)-(\mathcal{A}v)(t)\big|^{2}&=&\Bigg|\int_{0}^{1} \mathcal{ G}(t,s)\varphi^{\prime}(s)\Big(f(s,u(s))-f(s,v(s))\Big)ds\Bigg|^{2} \\
  &\leq&\Bigg[\int_{0}^{1} \mathcal{ G}(t,s)\varphi^{\prime}(s)\Big|f(s,u(s))-f(s,v(s))\Big|ds\Bigg]^{2} \\
  &\leq&\Bigg[\int_{0}^{1} \mathcal{ G}(t,s)\varphi^{\prime}(s)\frac{1}{2\sqrt{2}}\frac{\mu\Gamma(\alpha)}{\varphi^{\prime}(1)\big[\Phi(1)\big]^{\alpha-1}}\\
&&\times\sqrt{\psi\big(\|(u-v)^{2}\|_{\infty}\big)\theta\big(\psi\big(\|(u-v)^{2}\|_{\infty}\big)\big)}ds\Bigg]^{2}\\
  &\leq& \frac{1}{8}\psi\big(\|(u-v)^{2}\|_{\infty}\big)\theta\big(\psi\big(\|(u-v)^{2}\|_{\infty}\big)\big).
\end{eqnarray*}
Hence
\[\|(\mathcal{A}u-\mathcal{A}v)^{2}\|_{\infty}\leq \frac{1}{8}\psi\big(\|(u-v)^{2}\|_{\infty}\big)\theta\big(\psi\big(\|(u-v)^{2}\|_{\infty}\big)\big).\]
Let $\gamma:\mathcal{C}([0,1])\times \mathcal{C}([0,1])\rightarrow \mathbb{R^{+}}$ be defined by
\begin{equation*}
\gamma(u,v) = \begin{cases}1 ,& \tau\big(u(t),v(t)\big)\geq0,\ t\in[0,1],\\
0,& \text{otherwise}.
\end{cases}
\end{equation*}
So for $u,v\in \mathcal{C}([0,1])$ with $\tau\big(u(t),v(t)\big)\geq0$, $t\in[0,1]$, we have
\begin{eqnarray*}
\gamma(u,v)\psi\big(8d(\mathcal{A}u,\mathcal{A}v)\big)&\leq& 8 d(\mathcal{A}u,\mathcal{A}v)\\
&\leq&\theta\big(\psi(d(u,v))\big)\psi(d(u,v)),\ \theta\in\Theta.
\end{eqnarray*}
This proves $\mathcal{A}$ is a $\gamma-\psi$-Geraphty mapping.\\
Now, from \rm{(iii)}, we get for $u,v\in \mathcal{C}([0,1])$,
\begin{eqnarray*}
\gamma(u,v)\geq 1 &\Rightarrow & \tau\big(u(t),v(t)\big)\geq0\\
&\Rightarrow & \tau\big(\mathcal{A}u(t),\mathcal{A}v(t)\big)\geq0\\
&\Rightarrow & \gamma\big(\mathcal{A}u,\mathcal{A}v\big)\geq 1.
\end{eqnarray*}
Thus, $\mathcal{A}$ is $\gamma$-admissible.\\
By \rm{(ii)}, there exists $u_{0}\in \mathcal{C}([0,1])$ such that $\gamma\big(u_{0},\mathcal{A}u_{0}\big)\geq 1$. Utilizing \rm{(iv)} and Theorem \ref{thmp2}, there exists $u^{\star}\in \mathcal{C}([0,1])$ such that $u^{\star}=\mathcal{A}u^{\star}$. We  have proved that $u^{\star}$ is a solution of the problem \eqref{eq09}-\eqref{eq010}.
\end{proof}

 \section{Two illustrative examples}
\begin{exmp}
Let $\tau(x,y)=xy$, $\psi(t)=t$, $\theta(t)=\frac{1+t^{2}}{6+4t^{2}}$ for $t\geq0$.
We consider the fractional boundary value problem
\begin{equation}\label{eq4.1}
       \begin{cases}D^{\frac{5}{2}, \sin(\frac{\pi}{4}t)}u(t) + f(t,u(t)) = 0,\  t \in (0,1),\\
    u(0) = u^{\prime}(0) =0,\ u^{\prime}(1)=2 u\big(\frac{1}{2}\big),
       \end{cases}
       \end{equation}
      where $\alpha=\frac{5}{2}$, $\varphi(t)=\sin(\frac{\pi}{4}t)$, $\beta=2$ and $\eta=\frac{1}{2}$.\\
So
 \begin{eqnarray*}
(\alpha-1)\varphi^{\prime}(1)\frac{[\Phi(1)]^{\alpha-2}}{[\Phi(\eta)]^{\alpha-1}}&=&\frac{3\pi}{8}\sqrt{\frac{2\sqrt{2}}{(2-\sqrt{2})\sqrt{2-\sqrt{2}}}}\\
&\thickapprox&2.95903\\
&>&\beta=2,
 \end{eqnarray*}
 \begin{eqnarray*}
\mu&=& (\alpha-1)\varphi^{\prime}(1)[\Phi(1)]^{\alpha-2}-\beta[\Phi(\eta)]^{\alpha-1}\\
&=&\frac{3\pi}{8\times2^{\frac{3}{4}}}-\frac{(2-\sqrt{2})^{\frac{3}{4}}}{\sqrt{2}}\\
&\thickapprox&0.22703.
 \end{eqnarray*}
Considering
 \begin{eqnarray*}
f(t,u)&=&\frac{1}{16\sqrt{2}}\frac{\mu\Gamma(\alpha)}{\varphi^{\prime}(1)\big[\Phi(1)\big]^{\alpha-1}}u\\
&=&\frac{3}{8\times2^{1/4}\sqrt{\pi}}\Bigg[\frac{3\pi}{8\times2^{\frac{3}{4}}}
-\frac{(2-\sqrt{2})^{\frac{3}{4}}}{\sqrt{2}}\Bigg]u,\ \text{for}\ t\in[0,1].
 \end{eqnarray*}
We get
 \begin{eqnarray*}
|f(s,u(s))-f(s,v(s))|&=&\frac{1}{16\sqrt{2}}\frac{\mu\Gamma(\alpha)}{\varphi^{\prime}(1)\big[\Phi(1)\big]^{\alpha-1}}
|u(s)-v(s)|\\
&\leq&\frac{1}{2\sqrt{2}}\frac{\mu\Gamma(\alpha)}{\varphi^{\prime}(1)\big[\Phi(1)\big]^{\alpha-1}}\frac{\sqrt{d(u,v)}}{8}\\
&\leq&\frac{1}{2\sqrt{2}}\frac{\mu\Gamma(\alpha)}{\varphi^{\prime}(1)\big[\Phi(1)\big]^{\alpha-1}}\frac{\sqrt{d(u,v)}}{6}\\
&\leq&\frac{1}{2\sqrt{2}}\frac{\mu\Gamma(\alpha)}{\varphi^{\prime}(1)\big[\Phi(1)\big]^{\alpha-1}}\sqrt{d(u,v)}\theta(d(u,v)) \\
&\leq&\frac{1}{2\sqrt{2}}\frac{\mu\Gamma(\alpha)}{\varphi^{\prime}(1)\big[\Phi(1)\big]^{\alpha-1}}\sqrt{d(u,v)\theta(d(u,v))} \\
&=&\frac{1}{2\sqrt{2}}\frac{\mu\Gamma(\alpha)}{\varphi^{\prime}(1)\big[\Phi(1)\big]^{\alpha-1}}\\
&\times&\sqrt{\psi\big(\|(u-v)^{2}\|_{\infty}\big)\theta\big(\psi\big(\|(u-v)^{2}\|_{\infty}\big)\big)},\ s\in[0,1].
\end{eqnarray*}
So, assumption $\rm{(i)}$ from Theorem \ref{thmm2} is satisfied. It is obvious that assumptions $\rm{(ii)}-\rm{(iv)}$ in Theorem \ref{thmm2} hold. Therefore problem \eqref{eq4.1} has at least one positive solution.
\end{exmp}

\begin{exmp}
As a second example, we consider the following fractional boundary value problem
\begin{equation}\label{eq4.2}
       \begin{cases}D^{\frac{5}{2}, \frac{1}{2}\sqrt{1+t}}u(t) + f(t,u(t)) = 0,\  t \in (0,1),\\
    u(0) = u^{\prime}(0) =0,\ u^{\prime}(1)=4 u\big(\frac{1}{3}\big),
       \end{cases}
       \end{equation}
where  $ f(t,u) = \frac{1}{10}\tan\big(\frac{\pi}{3}t\big)\cos^{2}(u)-\frac{1}{3}e^{\frac{t}{2}}\frac{|u|}{1+|u|}$,
$\alpha=\frac{5}{2}$, $\varphi(t)=\frac{1}{2}\sqrt{1+t}$, $\beta=4$ and $\eta=\frac{1}{3}$.\\
By a direct computation, we get
 \begin{eqnarray*}
(\alpha-1)\varphi^{\prime}(1)\frac{[\Phi(1)]^{\alpha-2}}{[\Phi(\eta)]^{\alpha-1}}&=& \frac{9}{4}\sqrt{\frac{3}{2}}
\frac{\sqrt{\sqrt{2}-1}}{\big(2\sqrt{3}-3\big)\sqrt{2\sqrt{3}-3}}\\
&\thickapprox&5.60946\\
&>&\beta=4,
 \end{eqnarray*}
 \begin{eqnarray*}
\mu&=& (\alpha-1)\varphi^{\prime}(1)[\Phi(1)]^{\alpha-2}-\beta[\Phi(\eta)]^{\alpha-1}\\
&=&\frac{3}{16}\sqrt{\sqrt{2}-1}-\frac{1}{3}\sqrt{\frac{2}{3}}\big(2\sqrt{3}-3\big)\sqrt{2\sqrt{3}-3}\\
&\thickapprox&0.0346236.
 \end{eqnarray*}
Furthermore, by choosing $g_{1}(t)=\frac{1}{10}\tan\big(\frac{\pi}{3}t\big),
\ g_{2}(t)=\frac{1}{3}e^{\frac{t}{2}},$ and $g(t)=2g_{1}(t)+g_{2}(t)$,
we find that $\|g\|_{\infty}=g(1)=\frac{\sqrt{3}}{5}+\frac{\sqrt{e}}{3}\thickapprox0.895984$.
Also, for every $u_{1}, u_{2}, v_{1}, v_{1}\in\mathbb{R}$ and $t\in[0,1]$, we get
 \begin{eqnarray*}
 \big|f(t,u)-f(t,v)\big|&=&\Bigg|g_{1}(t)\big(\cos^{2}(u)-\cos^{2}(v)\big)+g_{2}(t)
\bigg(\frac{|v|-|u|}{(1+|u|)(1+|v|)}\bigg)\Bigg| \\
  &\leq&2 g_{1}(t) \big|\cos(u)-\cos(v) \big|+g_{2}(t)|u-v|\\
 &\leq&2 g_{1}(t) \big|u-v\big|+g_{2}(t)|u-v|\\
 &=&g(t)|u-v|.
   \end{eqnarray*}
So hypothesis $\rm{(H2)}$ is satisfied. Moreover, a simple computation gives
 \begin{eqnarray*}
\frac{\mu\Gamma(\alpha)}{\sqrt{2}\big[\Phi(1)\big]^{\alpha-1}\varphi^{\prime}(1)}&=&
6\sqrt{2\pi}\frac{\frac{3}{16}\sqrt{\sqrt{2}-1}-\frac{1}{3}\sqrt{\frac{2}{3}}\big(2\sqrt{3}-3\big)
\sqrt{2\sqrt{3}-3}}{(\sqrt{2}-1)\sqrt{\sqrt{2}-1}}\\
&\thickapprox&1.95333\\
&>&\|g\|_{\infty}.
 \end{eqnarray*}
Hence, by Theorem \ref{thmm1}, the problem \eqref{eq4.2} has a unique solution.
\end{exmp}

\end{document}